\tikzstyle{vertex}=[circle, draw, inner sep=1pt, minimum size=8pt]
\newcommand{\noi}{\noindent}
\newtheorem{theorem}{Theorem}[section]
\newtheorem{definition}[theorem]{Definition}
\newtheorem{lemma}[theorem]{Lemma}
\newtheorem{corollary}[theorem]{Corollary}
\newtheorem{conjecture}{Conjecture}
\newtheorem{problem}{Problem}
\title{\textbf{\sc Some New Results on Proper Colouring of Edge-set Graphs}}
\author{Johan Kok$^\ast$, Sudev Naduvath$^\dagger$}
\affil{\small Centre for Studies in Discrete Mathematics\\ Vidya Academy of Science \& Technology \\Thalakkottukara, Thrissur, Kerala, India.\\ $^\ast${\tt kokkiek2@tshwane.gov.za}\\$^\dagger${\tt sudevnk@gmail.com}}
\date{}
\begin{document}
\maketitle

\begin{abstract}
\noi In this paper, we present a foundation study for proper colouring of edge-set graphs. The authors consider that a detailed study of the colouring of edge-set graphs corresponding to the family of paths is best suitable for such foundation study. The main result is deriving the chromatic number of the edge-set graph of a path, $P_{n+1}$, $n \geq 1$. It is also shown that edge-set graphs for paths are perfect graphs.
\end{abstract}

\noi \textbf{Keywords:}  Chromatic colouring, rainbow neighbourhood, rainbow neighbourhood number, edge-set graph.
\vspace{0.35cm}

\noi \textbf{Mathematics Subject Classification 2010:} 05C15, 05C38, 05C75, 05C85. 

\section{Introduction}

For general notation and concepts in graphs and digraphs see \cite{BM,FH,DBW}. Unless mentioned otherwise, all graphs we consider in this paper are finite, simple, connected and undirected  graphs.

For a set of distinct colours $\mathcal{C}= \{c_1,c_2,c_3,\ldots,c_\ell\}$, a \textit{vertex colouring} of a graph $G$ is an assignment $\varphi:V(G) \mapsto \mathcal{C}$ . A vertex colouring is said to be a \textit{proper vertex colouring} of a graph $G$ if no two distinct adjacent vertices have the same colour. The cardinality of a minimum set of colours in a proper vertex colouring of $G$ is called the \textit{chromatic number} of $G$ and is denoted $\chi(G)$. A colouring of $G$ with exactly $\chi(G)$ colours may be called a $\chi$-colouring or a \textit{chromatic colouring} of $G$. 

A \textit{minimum parameter colouring} of a graph $G$ is a proper colouring of $G$ which consists of the colours $c_i;\ 1\le i\le \ell$, with minimum possible values for the subscripts $i$. Unless stated otherwise, we consider minimum parameter colouring throughout this paper. 

The set of vertices of $G$ having the colour $c_i$ is said to be the \textit{colour class} of $c_i$ in $G$ and is denoted by $\mathcal{C}_i$. The cardinality of the colour class $\mathcal{C}_i$ is said to be the weight of the colour $c_i$, denoted by $\theta(c_i)$. Note that $\sum\limits_{i=1}^{\ell}\theta(c_i)=\nu(G)$. 

Unless mentioned otherwise,  we colour the vertices of a graph $G$ in such a way that $\mathcal{C}_1=I_1$, the maximal independent set in $G$, $\mathcal{C}_2=I_2$, the maximal independent set in $G_1=G-\mathcal{C}_1$ and proceed like this until all vertices are coloured. This convention is called \textit{rainbow neighbourhood convention} (see \cite{KSJ}).

Unless mentioned otherwise, we shall colour a graph in accordance with the rainbow neighbourhood convention. 

Note that the closed neighbourhood $N[v]$ of a vertex $v \in V(G)$ which contains at least one coloured vertex of each colour in the chromatic colouring, is called a rainbow neighbourhood see \cite{KSJ}. The number of vertices in $G$ which yield rainbow neighbourhoods,  denoted by $r_\chi(G)$, is called the \textit{rainbow neighbourhood number} of $G$. Further studies on the rainbow neighbourhood number of different graph classes and graph operations can be seen in \cite{KSJ,KN1,KN2,KSB,NSKK,SNKK}.

In \cite{KSJ}, the bounds on $r_\chi(G)$ corresponding to of minimum proper colouring, denoted by $r^-_\chi(G)$ and $r^+_\chi(G)$, have been defined as the minimum value and maximum value of $r_\chi(G)$ over all permissible colour allocations. If we relax connectedness, it follows that the null graph $\mathfrak{N}_n$ of order $n\geq 1$ has $r^-(\mathfrak{N}_n)=r^+(\mathfrak{N}_n)=n$. For bipartite graphs and complete graphs, $K_n$ it follows that, $r^-(G)=r^+(G)=n$ and $r^-(K_n)=r^+(K_n)=n$.

We observe that if it is possible to permit a chromatic colouring of any graph $G$ of order $n$ such that the star subgraph obtained from vertex $v$ as center and its open neighbourhood $N(v)$ the pendant vertices, has at least one coloured vertex from each colour for all $v \in V(G)$ then $r_\chi(G)=n$. Certainly, examining this property for any given graph is complex.

\begin{lemma}
{\rm \cite{KSJ}} For any graph $G$ the graph $G'=K_1+G$ has $r_\chi(G')=1+r_\chi(G)$.
\end{lemma}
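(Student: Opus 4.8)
The plan is to exploit the rigid structure of the join $G' = K_1 + G$: writing $u$ for the apex vertex, $u$ is adjacent to every vertex of $G$, so in any proper colouring of $G'$ the colour of $u$ occurs nowhere else. First I would verify that $\chi(G') = \chi(G) + 1$ and, more importantly, set up an exact correspondence between chromatic colourings. The restriction of any $\chi(G')$-colouring of $G'$ to $V(G)$ is a proper colouring of $G$ avoiding the colour of $u$, hence using at most $\chi(G') - 1 = \chi(G)$ colours; since it must use at least $\chi(G)$, it uses exactly $\chi(G)$ and is therefore a chromatic colouring of $G$. Conversely, any chromatic colouring of $G$ together with a fresh colour on $u$ is a chromatic colouring of $G'$. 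One also checks this is consistent with the rainbow neighbourhood convention, under which $u$, being adjacent to all of $G$, necessarily forms a singleton colour class.

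Next I would analyse rainbow neighbourhoods vertex by vertex under such a colouring, say with $u$ receiving the colour $c_{\chi(G)+1}$ and $G$ receiving $c_1, \ldots, c_{\chi(G)}$. The vertex $u$ has $N_{G'}[u] = V(G')$, which contains every one of the $\chi(G')$ colours, so $u$ always yields a rainbow neighbourhood in $G'$. For a vertex $v \in V(G)$ we have $N_{G'}[v] = N_G[v] \cup \{u\}$; the colour $c_{\chi(G)+1}$ is supplied by $u$ itself, while the colours $c_1, \ldots, c_{\chi(G)}$ can only come from $N_G[v]$, since no vertex of $G$ carries $c_{\chi(G)+1}$. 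Hence $N_{G'}[v]$ is a rainbow neighbourhood of $G'$ if and only if $N_G[v]$ contains a vertex of each of $c_1, \ldots, c_{\chi(G)}$, that is, if and only if $v$ yields a rainbow neighbourhood in $G$.

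Summing over $V(G') = V(G) \cup \{u\}$ then gives $r_\chi(G') = r_\chi(G) + 1$, as required. The only genuinely delicate point is the first step: ensuring the passage between colourings of $G$ and $G'$ is a true bijection on \emph{chromatic} colourings (so that $r_\chi$, defined via $\chi$-colourings, transfers cleanly) and that this choice is the one singled out by the rainbow neighbourhood convention; everything after that is a direct neighbourhood comparison. Degenerate cases such as $G$ edgeless or $|V(G)| = 1$ cause no trouble and can be checked directly if desired.
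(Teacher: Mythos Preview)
The paper does not actually prove this lemma: it is stated with a citation to \cite{KSJ} and no proof is given in the present paper, so there is nothing to compare your argument against here. Your argument is correct and is the natural one: establish the bijection between chromatic colourings of $G$ and $G'=K_1+G$ (the apex $u$ taking a fresh colour), observe that $N_{G'}[u]=V(G')$ so $u$ always yields a rainbow neighbourhood, and that for $v\in V(G)$ the equality $N_{G'}[v]=N_G[v]\cup\{u\}$ makes $v$ a rainbow-neighbourhood vertex in $G'$ exactly when it is one in $G$.
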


\section{Rainbow Neighbourhood Number of Edge-set Graphs}

Edge-set graphs were introduced in \cite{KSC}. As the notion of an edge-set graph seems to be largely unknown. Therefore, the main definition and some important observations from \cite{KSC} will be presented in this section.

Let $A$ be a non-empty finite set. Let the set of all $s$-element subsets of $A$ (arranged in some order), where $1\leq s\leq |A|$, be denoted by $\mathscr{S}$ and the $i$-th element of $\mathscr{S}$ by, $A_{i,s}$.
 
\begin{definition}\label{Defn-2.1}{\rm  
\cite{KSC} Let $G(V,E)$ be a non-empty finite graph with $|E|=\varepsilon \geq 1$ and $\mathscr{E}=\mathscr{P}(E)-\{\emptyset\}$, where $\mathscr{P}(E)$ is the power set of the edge set $E(G)$. For $1\leq s\leq \varepsilon$, let $\mathscr{S}$ be the collection of all $s$-element subsets of $E(G)$ and $E_{s,i}$ be the $i$-th element of $\mathscr{S}$. Then, the \textit{edge-set graph} corresponding to $G$, denoted by $\mathcal{G}_G$, is the graph with the following properties.
\begin{enumerate}\itemsep0mm
\item[(i)] $|V(\mathcal{G}_G)|=2^{\varepsilon}-1$ so that there exists a one to one correspondence between $V(\mathcal{G}_G)$ and $\mathscr{E}$;
\item[(ii)] Two vertices, say $v_{s,i}$ and $v_{t,j}$, in $\mathcal{G}_G$ are adjacent if some elements (edges of $G$) in $E_{s,i}$ is adjacent to some elements of $E_{t,j}$ in $G$.
\end{enumerate}
}\end{definition}

From the above definition, it can be seen that the edge-set graph $\mathcal{G}_G$ of a given graph $G$ is dependent not only on the number of edges $\varepsilon$, but the structure of $G$ also. Note that it was erroneously remarked in \cite{KSC} that non-isomorphic graphs of the same size have distinct edge-set graphs. Figure \ref{fig:Figure-2EdgeSet} illustrates one contradictory case.

Note that an edge-set graph $\mathcal{G}_G$ has an odd number of vertices. If $G$ is a trivial graph, then $\mathcal{G}_G$ is an empty graph (since $\varepsilon=0$). Also, $\mathcal{G}_{P_2} = K_1$ and $\mathcal{G}_{P_3}=C_3$. In \cite{KSC} the following conventions were used.

\begin{enumerate}\itemsep0mm
\item[(i)] If an edge $e_j$ is incident with vertex $v_k$, then we write it as $(e_j \to v_k)$.
\item[(ii)] If the edges $e_i$ and $e_j$ of a graph $G$ are adjacent, then we write it as $e_i\sim e_j$.
\item[(iii)] The $n$ vertices of the path $P_n$ are positioned horizontally and the vertices and edges are labeled from left to right as $v_1, v_2, v_3, \ldots, v_n$ and $e_1, e_2, e_3, \ldots, e_{n-1}$, respectively.
\item[(iv)] The $n$ vertices of the cycle $C_n$ are seated on the circumference of a circle and the vertices and edges are labeled clockwise as $v_1, v_2, v_3, \ldots, v_n$ and $e_1, e_2, e_3, \ldots, e_n$, respectively such that $e_i=v_iv_{i+1}$, in the sense that $v_{n+1}=v_1$.
\end{enumerate}

Invoking the definition and observations given above, it is noticed that both $d^t_{G(e)}(G)$ and $d_{G(e)}(v_i)$ are single values, while $d_{G(v_k)}(e_j) \leq d_{G(v_m)}(e_j), (e_j \to v_k), (e_j \to v_m)$. The graphs having three edges $e_1, e_2, e_3$ are graphs $P_4, C_3$, and $K_{1,3}$. The corresponding edge-set graphs on the vertices $v_{1,1} =\{e_1\}, v_{1,2} = \{e_2\}, v_{1,3} = \{e_3\}, v_{2,1}=\{e_1,e_2\}, v_{2,2}=\{e_1, e_3\}, v_{2,3}=\{ e_2,e_3\}, v_{3,1}=\{e_1,e_2,e_3\}$ are depicted below.

\noi Figure \ref{fig:Figure-1EdgeSet} depicts the edge-set graph $\mathcal{G}_{P_4}$.

\begin{figure}[h!]
\centering
\includegraphics[width=0.5\linewidth]{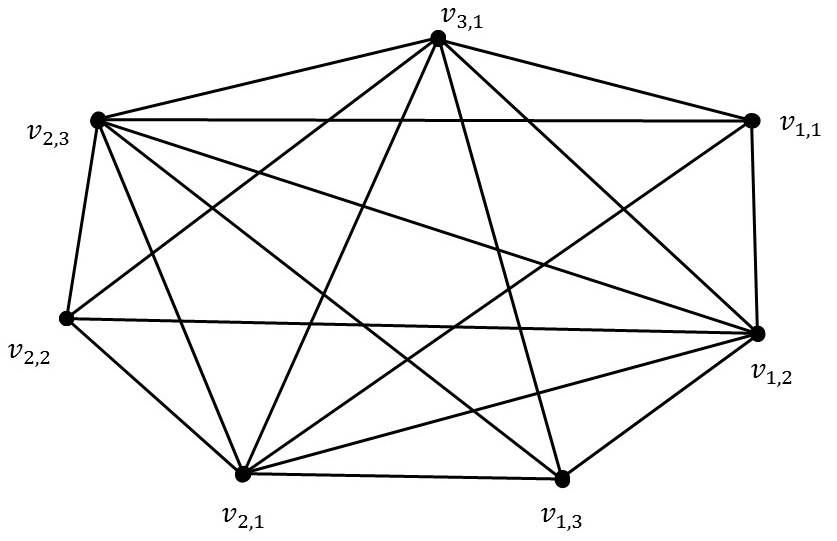}
\caption{Edge-set graph $\mathcal{G}_{P_4}$.}
\label{fig:Figure-1EdgeSet}
\end{figure}

\noi Figure \ref{fig:Figure-2EdgeSet} depicts the edge-set graph $\mathcal{G}_{C_3} = \mathcal{G}_{K_{1,3}} = K_7$.

\begin{figure}[h!]
\centering
\includegraphics[width=0.5\linewidth]{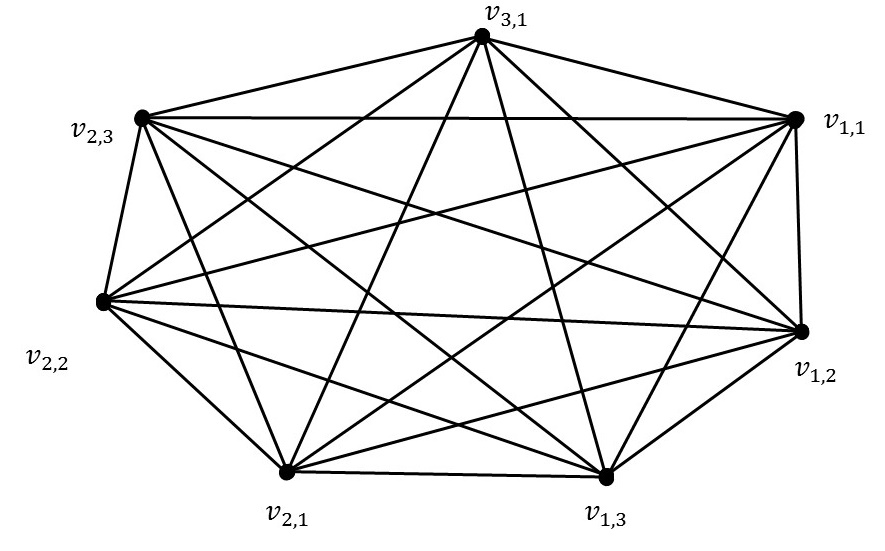}
\caption{Edge-set graph $\mathcal{G}_{C_3} = \mathcal{G}_{K_{1,3}} = K_7$.}
\label{fig:Figure-2EdgeSet}
\end{figure}

Notice that both $\mathcal{G}_{C_3}$ and $\mathcal{G}_{K_{1,3}}$ are complete graphs. 

\section{Proper Colouring of the Edge-set Graphs of Paths}

It is known that for a given size $\varepsilon \geq 1$ a graph of maximum order $\nu$, is a tree. Hence, for a given size the graphs with maximum structor index $si(G)$ (see \cite{KSB}), are the corresponding trees, $T$. It easily follows that for $\varepsilon(T) \geq 3$ only the star graphs have $\mathcal{G}_{S_{\varepsilon +1}}$, complete. Put another way, a tree $T$ has $\mathcal{G}_T$ complete if and only if $diam(T) \leq 2$. From the family of trees, a path corresponding to a given $\varepsilon$, denoted by $P_\varepsilon$, has largest diameter. These observations motivate a detailed study of the proper colouring and associated colour parameters of edge-set graphs of paths to lay the foundation for studying more complex graph classes.

For this section paths of the form $P_{n+1} = v_1e_1v_2e_2v_3\cdots e_nv_{n+1}$, will be considered. Such graph will be abbreviated to $P_{n+1} = v_1e_iv_i\succ$, $1\leq i \leq n$. To easily relate the results with Definition \ref{Defn-2.1}, note that $\varepsilon(P_{n+1})=n$. It can be easily verified that $\mathcal{G}_{P_2}=K_1$. Hence, $\chi({G}_{P_2})=1$. Also, $\mathcal{G}_{P_3}=K_3$ and hence, $\chi(\mathcal{G}_{P_3})=3$. These observations bring the main results. First, we state an important lemma.

\begin{lemma}\label{Lem-3.1}
Let $G(V,E)$ be a non-empty finite graph with $|E|=\varepsilon \geq 1$ and $\mathscr{E}=\mathscr{P}(E)-\{\emptyset\}$, where $\mathscr{P}(E)$ is the power set of the edge set $E(G)$. Then each edge $e_i$ is in exactly $2^{\varepsilon -1}$ subsets of $\mathscr{E}$.
\end{lemma}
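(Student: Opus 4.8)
The plan is to count, for a fixed edge $e_i \in E(G)$, the number of nonempty subsets of $E(G)$ that contain $e_i$. Since $\mathscr{E} = \mathscr{P}(E) \setminus \{\emptyset\}$ and $|E| = \varepsilon$, a subset $S \subseteq E(G)$ containing $e_i$ is determined precisely by its intersection with the remaining $\varepsilon - 1$ edges, namely $S \setminus \{e_i\} \subseteq E(G) \setminus \{e_i\}$, which may be any of the $2^{\varepsilon - 1}$ subsets of an $(\varepsilon-1)$-element set. Hence there is a bijection between $\{S \in \mathscr{P}(E) : e_i \in S\}$ and $\mathscr{P}(E \setminus \{e_i\})$, giving exactly $2^{\varepsilon-1}$ such subsets.

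The only point requiring a remark is the passage from $\mathscr{P}(E)$ to $\mathscr{E}$: removing the empty set from $\mathscr{P}(E)$ does not affect the count, because the empty set does not contain $e_i$ in the first place, so every subset counted already lies in $\mathscr{E}$. Thus the claimed number $2^{\varepsilon-1}$ holds verbatim for $\mathscr{E}$. I would phrase this in one sentence after establishing the bijection.

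Honestly, there is no real obstacle here; the statement is an elementary counting fact and the whole argument is the bijection $S \mapsto S \setminus \{e_i\}$ together with its inverse $T \mapsto T \cup \{e_i\}$. If one wanted to be maximally careful, the step deserving a line of justification is checking that this map is well-defined in both directions — that is, $S \setminus \{e_i\}$ really is a subset of $E \setminus \{e_i\}$, and conversely $T \cup \{e_i\}$ is a nonempty subset of $E$ containing $e_i$ — but both are immediate. I would present the proof in two or three short sentences: state the bijection, verify it is a bijection, and conclude $|\{S \in \mathscr{E} : e_i \in S\}| = |\mathscr{P}(E \setminus \{e_i\})| = 2^{\varepsilon - 1}$.
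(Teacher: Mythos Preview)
Your argument is correct and is precisely the standard bijection one would expect: the paper's own proof is a one-line hand-wave (``follows directly from the well-definedness and well-ordering of the power set''), and your explicit bijection $S \mapsto S \setminus \{e_i\}$ is just the honest version of that assertion. In short, same approach, with your write-up actually supplying the details the paper omits.
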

\begin{proof}
The result follows directly from the well-definedness and well-ordering of the power set, $\mathscr{P}(E)$.
\end{proof}

It is observed that if the number of subsets which has say, $e_i$ as element is $t$, then within the corresponding $t$ subsets the edge $e_j$, $j\neq i$ will be in $\frac{t}{2} = 2^{\varepsilon -2}$ of those subsets.

\begin{theorem}\label{Thm-3.2}
The edge-set graph $\mathcal{G}_{P_{n+1}}$, $n \geq 1$ has
\begin{equation*}
\chi(\mathcal{G}_{P_{n+1}}) =
\begin{cases}
1\ \text{or}\ 3, & \text {if $P_2$ or $P_3$ respectively},\\
5, & \text {if $P_4$},\\
2^{n-1}+ 2^{n-2}-2, & \text {for $P_{n+1},\ n \geq 4$}.
\end{cases}
\end{equation*}
\end{theorem}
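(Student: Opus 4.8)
\emph{Plan.} The plan is to sandwich $\chi(\mathcal{G}_{P_{n+1}})$ between a clique lower bound and an explicit proper colouring and to show the two bounds agree. The small cases are dealt with directly: $\mathcal{G}_{P_2}=K_1$ and $\mathcal{G}_{P_3}=K_3$ give $\chi=1,3$, while a short check from Definition~\ref{Defn-2.1} shows that $\mathcal{G}_{P_4}$ is the join of the $K_4$ on the four subsets of $E(P_4)$ containing $e_2$ with the independent triple $\{e_1\},\{e_3\},\{e_1,e_3\}$, so $\chi(\mathcal{G}_{P_4})=4+1=5$. The real content is the value $2^{n-1}+2^{n-2}-2$ for $n\ge 4$.

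First I would recast adjacency combinatorially. Labelling the edges of $P_{n+1}$ as $e_1,\ldots,e_n$, we have $e_i\sim e_j$ iff $|i-j|=1$, so two vertices of $\mathcal{G}_{P_{n+1}}$ corresponding to nonempty $S,T\subseteq\{1,\ldots,n\}$ are adjacent precisely when some $i\in S$ and $j\in T$ satisfy $|i-j|=1$ (a common index contributes nothing, so overlap of $S$ and $T$ is irrelevant). Then $\chi(\mathcal{G}_{P_{n+1}})$ is the least number of classes into which one can partition the nonempty subsets of $\{1,\ldots,n\}$ so that no class contains two subsets one of which has an index adjacent to an index of the other.

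For the lower bound I would exhibit an explicit clique of size $2^{n-1}+2^{n-2}-2$. The basic block is that all subsets containing a fixed consecutive pair $\{k,k+1\}$ form a clique of size $2^{n-2}$; this can be enlarged by permitting a short window of consecutive pairs (for instance all subsets meeting $\{1,2\}$, $\{2,3\}$ or $\{3,4\}$ in a full pair, which works because any two of these pairs lie within index-distance $1$), and then by adjoining certain further subsets whose index sets are ``spread out'' but still forced to be adjacent to everything already chosen. The size is then computed by inclusion--exclusion over the prescribed patterns, and one must simultaneously argue that no clique is larger --- equivalently, bound $\alpha(\mathcal{G}_{P_{n+1}})$ from above.

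For the matching upper bound I would give the clique its forced colours and then show each remaining subset can be placed in one of those classes because it is non-adjacent to a suitable subset already there, so that every colour class remains an independent set --- a family of pairwise index-separated subsets of a path. The real obstacle is this exact combinatorics: correctly identifying the maximum cliques, hence the optimal colourings, of $\mathcal{G}_{P_{n+1}}$. Because even $\mathcal{G}_{P_5}$ already contains fairly large cliques, I would first recompute $\omega(\mathcal{G}_{P_5})$ by hand and check it against the claimed value before committing to the general construction; and if $\mathcal{G}_{P_{n+1}}$ is perfect (as the paper asserts separately), the whole theorem reduces to this clique count, so it is worth getting exactly right.
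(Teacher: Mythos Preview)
Your overall strategy---a clique lower bound matched by an explicit proper colouring---is exactly the route the paper takes, and your combinatorial reformulation of adjacency is correct. The crucial point, however, is the one you flag in your last paragraph: the hand computation you propose for $\mathcal{G}_{P_5}$ does \emph{not} confirm the claimed value, so neither your plan nor the paper's argument can be completed as written.

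For $n=4$ the formula asserts $\chi(\mathcal{G}_{P_5})=2^{3}+2^{2}-2=10$, but in fact $\omega(\mathcal{G}_{P_5})\ge 11$. Take the eleven subsets
\[
\{2\},\ \{3\},\ \{1,4\},\ \{1,2\},\ \{2,3\},\ \{3,4\},\ \{1,2,3\},\ \{1,2,4\},\ \{1,3,4\},\ \{2,3,4\},\ \{1,2,3,4\}.
\]
The last eight are precisely the subsets of $\{1,2,3,4\}$ containing a consecutive pair; any two of them are adjacent because some consecutive pair in one lies within index-distance $1$ of some consecutive pair in the other. The path-neighbourhoods of $\{2\},\{3\},\{1,4\}$ are $\{1,3\},\{2,4\},\{2,3\}$ respectively, and every one of the eight sets meets each of these three sets, while $\{2\},\{3\},\{1,4\}$ are themselves pairwise adjacent. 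Hence this is an $11$-clique and $\chi(\mathcal{G}_{P_5})\ge 11>10$. (In fact $\chi(\mathcal{G}_{P_5})=11$: give the eleven clique vertices distinct colours and place $\{1\}$ and $\{1,3\}$ in the class of $\{3\}$, and $\{4\}$ and $\{2,4\}$ in the class of $\{2\}$.) Thus the stated formula $2^{n-1}+2^{n-2}-2$ is already false at $n=4$. The paper's Part~3 proceeds by a loose induction that is too imprecise to isolate a single faulty step, but since it reaches a false conclusion it cannot be repaired without changing the statement; your ``verify $\omega(\mathcal{G}_{P_5})$ first'' instinct is the right one.
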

\begin{proof}
\textit{Part 1:} Trivial is the observation that $\mathcal{G}_{P_2} = K_1$ and that result in equality. It has been observed that $\mathcal{G}_{P_3} = K_3$ and hence $\chi(\mathcal{G}_{P_3})=3$.

\textit{Part 2:} In constructing $\mathcal{G}_{P_4}$ begin with $\mathcal{G}_{P_3}$ which has vertices $\{e_1\}$, $\{e_2\}$, $\{e_1,e_2\}$. Add a disjoint copy of $\mathcal{G}_{P_3}$ and relabel the vertices of this copy to be $\{e_1,e_3\}$, $\{e_2,e_3\}$, $\{e_1,e_2,e_3\}$ to obtain, $\mathcal{G}'_{P_3}$. Clearly, $\mathcal{G}'_{P_3}$ complies with Definition \ref{Defn-2.1}.

Consider $H= \mathcal{G}_{P_3}\cup \mathcal{G}'_{P_3}$ and add the cut edges, $\{e_2\}\{e_1,e_3\}$, $\{e_2\}\{e_2,e_3\}$, $\{e_2\}\{e_1,e_2,e_3\}$, $\{e_1,e_2\}\{e_1,e_3\}$, $\{e_1,e_2\}\{e_2,e_3\}$, $\{e_1,e_2\}\{e_1,e_2,e_3\}$. Clearly, the induced subgraph, $\langle\{e_2\},\{e_1,e_2\},\{e_1,e_3\},\{e_2,e_3\},\{e_1,e_2,e_3\}\rangle = K_5$. Now add all additional bridges in accordance with Definition \ref{Defn-2.1} to obtain graph $H'$. Due to symmetry considerations between edges $e_1$ and $e_2$ in $P_3$, exactly two maximum cliques $K_5$ come into existence hence, $\omega(H') = 5$. Finally, by adding vertex $\{e_3\}$ and the corresponding edges in accordance with Definition \ref{Defn-2.1} and by symmetry considerations between edges $e_1$ and $e_3$ in $P_4$, the edge-set graph $\mathcal{G}_{P_4}$ has exactly four maximum cliques $K_5$. Therefore, $\chi(\mathcal{G}_{P_4}) \geq 5$.

Invoking Definition \ref{Defn-2.1}, consider the following colouring of $\mathcal{G}_{P_4}$. Let $c(v_{1,1})=c_1$, $c(v_{1,3})=c_1$, $c(v_{2,2})=c_1$, $c(v_{1,2})=c_2$, $c(v_{2,1})=c_3$, $c(v_{2,3})=c_4$, $c(v_{3,1})=c_5$.  Clearly, the colouring is proper and hence $\chi(\mathcal{G}_{P_4}) \leq 5$. Hence we have $\chi(\mathcal{G}_{P_4}) = 5$.

\textit{Part 3:} For $n\geq 4$, and the path path $P_{n+1}$ the edge-set graph $\mathcal{G}_{P_{(n-1)+1}}$ of the preceding path hence, the $(n-1)$-edge path $P_{(n-1)+1}$, is incomplete. In accordance with the procedure described in Part 2, consider $\mathcal{G}_{P_{(n-1)+1}}$ and $\mathcal{G}'_{P_{(n-1)+1}}$. Since in $\mathcal{G}'_{P_{(n-1)+1}}$ the edge $e_n$ has been added to each vertex corresponding to the vertices $v_{i,j} \in V(\mathcal{G}_{P_{(n-1)+1}})$, the new edges in accordance with Definition \ref{Defn-2.1} are those between all pairs of vertices for which at least one vertex has $e_{n-1} \in v'_{i,j}$. From Lemma \ref{Lem-3.1}, it follows that at least one complete induced subgraph, $K_{2^{n-2}}$ exists in $\mathcal{G}'_{P_{(n-1)+1}}$. All pairs of vertices which has both $e_{n-2},e_{n-1} \in v'_{i,j}$ is an edge in $\mathcal{G}'_{P_{(n-1)+1}}$ so  least one complete induced subgraph, $K_{2^{n-2}+1}$ exists in $\mathcal{G}'_{P_{(n-1)+1}}$. Proceeding to vertices for which edge $e_{n-3} \in v'_{i,j}$ and so on until the edge $e_1$ has been accounted for results in $\mathcal{G}'_{P_{(n-1)+1}}$ being complete. Hence, $\chi(\mathcal{G}'_{P_{(n-1)+1}}) = 2^{n-1}-1$.

Finally, by adding the bridges between $\mathcal{G}_{P_{(n-1)+1}}$ and $\mathcal{G}'_{P_{(n-1)+1}}$ and through similar arguments in respect of edges $e_{n-2},e_{n-1} \in v_{i,j} \in V(\mathcal{G}_{P_{(n-1)+1}})$ and so on, it follows that at least one maximum induced clique, of order $2^{n-2}-1 + \chi(\mathcal{G}_{P_{(n-1)+1}})$, exists in $\mathcal{G}_{P_{n+1}}$. Therefore, $\chi(\mathcal{G}_{P_{n+1}}) \geq 2^{n-1}+ 2^{n-2}-2$. By allocating colours similar to the procedure described in Part-2, it follows that $2^{n-1}+ 2^{n-2}-2 \leq \chi(\mathcal{G}_{P_{n+1}}) \leq 2^{n-1}+ 2^{n-2}-2 \Leftrightarrow \chi(\mathcal{G}_{P_{n+1}}) = 2^{n-1}+ 2^{n-2}-2$. Therefore, by immediate induction, the result follows for all $n \geq 4$. 
\end{proof}

\begin{corollary}\label{Cor-3.3}
\begin{enumerate}\itemsep0mm
\item[(a)] Each vertex in an edge-set graph $\mathcal{G}_{P_{n+1}}$, $n \geq 2$ belongs to some maximum clique in $\mathcal{G}_{P_{n+1}}$.
\item[(b)] The edge-set graphs $\mathcal{G}_{P_{n+1}}$, $n \geq 1$ has clique number, $\omega(\mathcal{G}_{P_{n+1}}) = 2^{n-1}+ 2^{n-2}-2$.
\item[(c)] The edge-set graphs $\mathcal{G}_{P_{n+1}}$, $n \geq 1$ are perfect graphs.
\item[(d)] The edge-set graph $\mathcal{G}_{P_{n+1}}$ has, $r^-_chi(\mathcal{G}_{P_{n+1}}) = r^+_chi(\mathcal{G}_{P_{n+1}}) = 2^n - 1$.
\end{enumerate}
\end{corollary}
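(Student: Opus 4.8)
The plan is to read off (b) and (d) from Theorem~\ref{Thm-3.2} together with a clique analysis, to prove (a) by induction along the recursion used in the proof of Theorem~\ref{Thm-3.2}, and to deduce (c) from the structure of $\mathcal{G}_{P_{n+1}}$; I expect (c) to be the main obstacle. For (b), first dispose of the small cases by $\mathcal{G}_{P_2}=K_1$ and $\mathcal{G}_{P_3}=K_3$. For $n\ge 4$, the generic inequality $\omega(\mathcal{G}_{P_{n+1}})\le\chi(\mathcal{G}_{P_{n+1}})$ and the value $\chi(\mathcal{G}_{P_{n+1}})=2^{n-1}+2^{n-2}-2$ from Theorem~\ref{Thm-3.2} give the upper bound, while the maximum induced clique on $2^{n-1}+2^{n-2}-2$ vertices exhibited in Part~3 of the proof of that theorem gives the matching lower bound, so $\omega(\mathcal{G}_{P_{n+1}})=2^{n-1}+2^{n-2}-2$.

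For (a), I would induct on $n$, using the partition of $V(\mathcal{G}_{P_{n+1}})$ into $V(\mathcal{G}_{P_n})$, the set $V(\mathcal{G}'_{P_n})$ of subsets of $E(P_{n+1})$ containing $e_n$, and the single vertex $v_{1,n}=\{e_n\}$, exactly as in the proof of Theorem~\ref{Thm-3.2}. Any vertex of $\mathcal{G}'_{P_n}$, together with $v_{1,n}$, lies in the complete subgraph spanned by all $e_n$-subsets, and one checks that this complete subgraph extends to a maximum clique of $\mathcal{G}_{P_{n+1}}$ on adjoining a maximum clique of $\mathcal{G}_{P_n}$ supported on vertices joined to every $e_n$-subset; the inductive hypothesis, combined with the value $\omega(\mathcal{G}_{P_n})$ from (b), supplies such a clique through any prescribed vertex of $\mathcal{G}_{P_n}$. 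For a vertex of $\mathcal{G}_{P_n}$ itself one invokes the inductive statement for $\mathcal{G}_{P_n}$, observes that the bridges added in passing to $\mathcal{G}_{P_{n+1}}$ only enlarge the maximum cliques through it, and uses the path automorphism $e_i\mapsto e_{n+1-i}$ of $P_{n+1}$ to reduce the remaining vertices to cases already treated. The bases $P_3$ and $P_4$ are verified directly from the construction in Part~2 and Figure~\ref{fig:Figure-1EdgeSet}.

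Part (d) then follows at once: by (b) we have $\omega(\mathcal{G}_{P_{n+1}})=\chi(\mathcal{G}_{P_{n+1}})$, so in every minimum proper colouring each maximum clique receives $\chi$ pairwise distinct colours, i.e.\ is rainbow; by (a) every vertex $v$ lies in some maximum clique $Q$, and since $Q\subseteq N[v]$ the closed neighbourhood $N[v]$ already meets every colour class, so $v$ yields a rainbow neighbourhood. As $v$ and the colouring were arbitrary, $r_\chi(\mathcal{G}_{P_{n+1}})=\nu(\mathcal{G}_{P_{n+1}})$ for every permissible allocation, whence $r^-_\chi(\mathcal{G}_{P_{n+1}})=r^+_\chi(\mathcal{G}_{P_{n+1}})=\nu(\mathcal{G}_{P_{n+1}})=2^{n}-1$.

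Part (c) is where I expect the real work. Note that (b) already yields $\omega=\chi$ for $\mathcal{G}_{P_{n+1}}$ itself, so perfection reduces to verifying $\omega(H)=\chi(H)$ for every induced subgraph $H$; equivalently, by the Weak Perfect Graph Theorem, one may argue in the complement, in which $E_{s,i}$ and $E_{t,j}$ are adjacent exactly when no edge of $E_{s,i}$ is incident in $P_{n+1}$ with an edge of $E_{t,j}$. My first line of attack would be the Strong Perfect Graph Theorem: a vertex of $\mathcal{G}_{P_{n+1}}$ whose edge set dominates every edge of $P_{n+1}$ in the line graph is adjacent to all other vertices, hence lies in no induced cycle of length $\ge 4$ and in no induced antihole of length $\ge 5$, so any offending hole or antihole must be confined to the remaining vertices, to which the recursion used in (a) can be applied. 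A possible alternative is to realise $\mathcal{G}_{P_{n+1}}$ as assembled from the perfect graphs $K_1$, complete graphs and $\mathcal{G}_{P_n}$ via perfection-preserving operations (disjoint union, adjunction of a vertex with clique neighbourhood, Lov\'asz substitution); the obstacle there is that the extra vertex $v_{1,n}$ is not simplicial, so such a decomposition has to be arranged with some care.
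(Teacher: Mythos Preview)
Your proposal is far more detailed than the paper's own proof, which consists of the single sentence ``The results are a direct consequence from the proof of Theorem~\ref{Thm-3.2}.''  For parts (a), (b) and (d) your arguments are essentially an explicit unpacking of what the paper leaves implicit: the clique exhibited in Part~3 of the proof of Theorem~\ref{Thm-3.2} gives $\omega\ge 2^{n-1}+2^{n-2}-2$, the chromatic number gives the reverse inequality, and the rainbow-neighbourhood conclusion follows exactly as you say once every vertex lies in a maximum clique.  Your inductive treatment of (a), using the decomposition of $V(\mathcal{G}_{P_{n+1}})$ and the path automorphism $e_i\mapsto e_{n+1-i}$, is a genuine addition of content over the paper's bare assertion.

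Where you differ most from the paper is part (c).  The paper treats perfection as an immediate corollary here and then restates it separately as Theorem~\ref{Thm-3.4}, whose proof relies only on (a), on $\omega=\chi$ for $\mathcal{G}_{P_{n+1}}$ itself, and on the observation that the maximum independent set is unique and induces a null graph; from this the paper concludes $\omega(H)=\chi(H)$ for all induced $H$.  You are right to flag (c) as the substantive step: the implication the paper uses---that if every vertex lies in a maximum clique and $\omega(G)=\chi(G)$ then every induced subgraph has $\omega=\chi$---is not a valid general principle, so your instinct to reach for the Strong Perfect Graph Theorem or a decomposition into perfection-preserving operations is a more honest route.  That said, neither of your two proposed attacks is carried to completion in the proposal, so in its current form (c) remains a sketch rather than a proof; the paper's argument is shorter but rests on a step that would not survive scrutiny, while yours identifies the correct tools but still has to be finished.
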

\begin{proof}
The results are a direct consequence from the proof of Theorem \ref{Thm-3.2}.
\end{proof}

\begin{theorem}\label{Thm-3.4}
An edge-set graph $\mathcal{G}_{P_{n+1}}$, $n \geq 1$  is a perfect graph.
\end{theorem}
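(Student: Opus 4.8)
Theorem~\ref{Thm-3.4} restates Corollary~\ref{Cor-3.3}(c), so one option is to cite it directly; a self‑contained argument is most naturally organised through the Strong Perfect Graph Theorem, which reduces the claim to showing that $\mathcal{G}_{P_{n+1}}$ contains no induced odd hole $C_{2k+1}$ and no induced odd antihole $\overline{C_{2k+1}}$ for $k \ge 2$. To set this up, identify each vertex of $\mathcal{G}_{P_{n+1}}$ with the nonempty subset of $\{1,2,\dots,n\}$ recording which of $e_1,\dots,e_n$ it contains. Since $e_i \sim e_j$ in $P_{n+1}$ precisely when $|i-j| = 1$, Definition~\ref{Defn-2.1} makes subsets $S$ and $T$ adjacent in $\mathcal{G}_{P_{n+1}}$ exactly when there are $i \in S$, $j \in T$ with $|i-j| = 1$; equivalently, writing $\partial S := \{\, i-1,\, i+1 : i \in S \,\} \cap \{1,\dots,n\}$, we have $S \sim T \iff \partial S \cap T \neq \emptyset \iff \partial T \cap S \neq \emptyset$. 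Thus $\mathcal{G}_{P_{n+1}}$ is the ``touching graph'' of the nonempty edge‑subsets of a path, and the plan is to exploit the linearity of that host.

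The first step is to forbid long induced cycles. Suppose $S_1, S_2, \dots, S_m$ with $m \ge 5$ induce a cycle, indices read modulo $m$, so $S_iS_{i+1}$ are edges and $S_iS_{i+2}$ are non‑edges. The tool I would use is an extremal‑element argument: let $p = \min \bigcup_i S_i$, say $p \in S_a$. Then $S_{a-2}$ and $S_{a+2}$ are both disjoint from $\partial S_a$, while $S_{a-1}$ and $S_{a+1}$ both meet it; walking around the cycle and tracking how the leftmost element $\min(S_i)$ moves, together with the mirror‑image analysis at $q = \max \bigcup_i S_i$, the objective is to force some pair $S_i, S_{i+2}$ to contain path‑adjacent elements --- i.e.\ a chord --- contradicting that the cycle is induced, with the same bookkeeping capping the attainable length at $4$. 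The induced‑antihole case is the complementary one: in $\overline{\mathcal{G}_{P_{n+1}}}$ adjacency means ``$S$ and $T$ are at mutual path‑distance at least $2$'', so an induced $\overline{C_m}$ is a family of $m$ edge‑subsets whose non‑touching pattern copies that of $C_m$, and an interval‑packing argument along the path would again aim at $m \le 4$. One simplification available throughout: $\mathcal{G}_{P_{n+1}}$ has universal vertices (for instance every $S \supseteq \{2,\dots,n-1\}$ is one, directly from the $\partial$‑condition), so $\mathcal{G}_{P_{n+1}} = K_1 + (\mathcal{G}_{P_{n+1}} - v)$ for such a $v$; since a join with $K_1$ preserves perfection in both directions, one may delete all universal vertices and analyse only the residual graph.

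The main obstacle is exactly this no‑odd‑hole step, and it is the step I would test before trusting the statement. The tempting heuristic ``a path cannot host a long induced cycle'' does not carry over, because the vertices of $\mathcal{G}_{P_{n+1}}$ are \emph{arbitrary} edge‑subsets: a set such as $\{e_1, e_j\}$ is simultaneously ``near the left end'' and ``near position $j$'', which can let a chain of otherwise far‑apart subsets close up into a cycle. Consequently, before writing the extremal argument, I would explicitly search $\mathcal{G}_{P_{n+1}}$ for a short odd hole for $n$ up to about $10$ --- for example, checking whether the five edge‑sets $\{e_1,e_2\}$, $\{e_3,e_4\}$, $\{e_5,e_6\}$, $\{e_7,e_8\}$, $\{e_1,e_9\}$ induce a $C_5$ in $\mathcal{G}_{P_{10}}$. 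Should such a configuration exist, the extremal argument cannot go through and Theorem~\ref{Thm-3.4} --- hence Corollary~\ref{Cor-3.3}(c), and the accompanying equality $\omega(\mathcal{G}_{P_{n+1}}) = \chi(\mathcal{G}_{P_{n+1}})$ --- would need to be restated, presumably with a restriction on $n$. If no such small hole appears, completing the extremal case analysis above, using the universal‑vertex reduction to keep the configurations small, is the route I would carry through.
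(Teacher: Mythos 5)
Your decision to test the statement before trusting it is the decisive step, and the test refutes the theorem: the five sets you single out do induce a $C_5$. Adjacency in $\mathcal{G}_{P_{n+1}}$ means exactly that the two subsets contain edges with consecutive indices (a shared edge does \emph{not} create adjacency -- this reading is forced by the paper's own proper colouring of $\mathcal{G}_{P_4}$, which places $\{e_1\}$, $\{e_3\}$ and $\{e_1,e_3\}$ in one colour class). With that rule, the consecutive pairs among $\{e_1,e_2\},\{e_3,e_4\},\{e_5,e_6\},\{e_7,e_8\},\{e_1,e_9\}$ touch via $(e_2,e_3)$, $(e_4,e_5)$, $(e_6,e_7)$, $(e_8,e_9)$ and $(e_1,e_2)$, while in every non-consecutive pair all index gaps are at least $2$, so there is no chord. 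Since adjacency depends only on the subsets themselves, these five vertices induce a $C_5$ in $\mathcal{G}_{P_{n+1}}$ for every $n\ge 9$; even smaller holes exist, e.g.\ $\{e_2\},\{e_3\},\{e_4\},\{e_5\},\{e_1,e_6\}$ induce a $C_5$ in $\mathcal{G}_{P_7}$, so Theorem~\ref{Thm-3.4} (and with it Corollary~\ref{Cor-3.3}(c)) fails for all $n\ge 6$. Consequently the extremal ``no long induced cycle'' step you were planning cannot be completed, and the universal-vertex reduction cannot rescue it (none of these five sets is universal); your own observation that a set such as $\{e_1,e_j\}$ straddles both ends of the path is precisely what destroys the interval-graph (chordal) structure that would hold if only connected edge-sets were allowed.

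It is also worth seeing where the paper's own proof goes wrong, since your counterexample pinpoints it. The paper first argues weak perfection, $\omega(\mathcal{G}_{P_{n+1}})=\chi(\mathcal{G}_{P_{n+1}})$, and then infers from Corollary~\ref{Cor-3.3}(a) (every vertex lies in some maximum clique) that $\omega(H)=\chi(H)$ for \emph{every} induced subgraph $H$. That implication is invalid: weak perfection of the whole graph says nothing about its proper induced subgraphs, and your $C_5$ is exactly such an $H$ with $\omega(H)=2<3=\chi(H)$. So the right outcome of your ``check first'' step is not to finish the Strong Perfect Graph Theorem argument but to conclude that the theorem must be restricted: it does hold for $n\le 3$ (where $\mathcal{G}_{P_{n+1}}$ is $K_1$, $K_3$, or the join of $K_4$ with $\overline{K_3}$), it fails for all $n\ge 6$, and the cases $n=4,5$ require a separate finite check.
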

\begin{proof}
For $P_1$, $P_2$ the result is trivial. From Theorem \ref{Thm-3.2} and Corollary \ref{Cor-3.3}(b) we have, $n\geq 2$ and hence it follows that $\omega(\mathcal{G}_{P_{n+1}}) =2^{n-1}+ 2^{n-2}-2 = \chi(\mathcal{G}_{P_{n+1}})$. Hence, an edge-set graph is weakly perfect. From Definition \ref{Defn-2.1}, it follows that an edge-set graph has a unique maximum independent set $X$. Furthermore, $\langle X\rangle$ is a null graph hence, any subgraph thereof is perfect.

Also, from Corollary \ref{Cor-3.3}(a), each vertex in $V(\mathcal{G}_{P_{n+1}})$ is in some induced maximum clique. It then follows that $\omega(H) = \chi(H)$, $\forall H\subseteq \mathcal{G}_{P_{n+1}}$, $n \geq 1$. Hence the result. 
\end{proof}

\begin{conjecture}
The edge-set graphs of acyclic graphs are perfect graphs.
\end{conjecture}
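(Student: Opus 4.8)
The plan is to invoke the Strong Perfect Graph Theorem and to show that, for a forest $G$, the edge-set graph $\mathcal{G}_G$ contains neither an induced odd hole $C_{2k+1}$ ($k\geq 2$) nor an induced odd antihole $\overline{C_{2k+1}}$ ($k\geq 2$). First I would recast the adjacency relation of $\mathcal{G}_G$ in terms of the host forest. Writing $S(A)=\bigcup_{e\in A}\{\text{endpoints of }e\}$ for the set of $G$-vertices covered by an edge subset $A$, two vertices $A,B\in V(\mathcal{G}_G)$ are adjacent precisely when some vertex of $G$ is incident with two \emph{distinct} edges $a\in A$ and $b\in B$. In particular, non-adjacency of $A$ and $B$ forces every edge of $A$ to be vertex-disjoint from every edge of $B$ that differs from it, so $S(A)$ and $S(B)$ can meet only inside the common edge set $A\cap B$. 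I would also record that, since $G$ is bipartite, the line graph $L(G)$ is perfect --- indeed for a forest it is a block graph, hence chordal --- a fact to be exploited once the analysis is pushed down to individual edges.

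For the hole case, suppose $A_0,A_1,\ldots,A_{2k}$ induce an odd cycle, indices read modulo $2k+1$, with $A_i\sim A_{i+1}$ and $A_i\not\sim A_j$ otherwise. For each consecutive pair I would fix a witness: a vertex $w_i$ of $G$ incident with distinct edges $a_i\in A_i$ and $b_i\in A_{i+1}$. The strong vertex-disjointness coming from the non-edges $A_i\not\sim A_{i+2}$ and the rest tightly restricts how these witnesses and witness-edges may coincide, and the cyclic pattern of the hole then forces the witness edges to assemble into a closed trail in $G$. Since $G$ is acyclic, no such closed trail exists, which gives the contradiction. The delicate point here is the bookkeeping when the subsets overlap, so that a single edge may serve as a witness for several pairs; I would handle it by first reducing, via the remark above on $S(A)\cap S(B)$, to the case of pairwise essentially-disjoint supports.

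The antihole case is where I expect the real difficulty. Because $\mathcal{G}_G$ is dense --- it typically has many \emph{universal} vertices (for instance, any subset whose edges dominate $L(G)$) --- an induced $\overline{C_{2k+1}}$ is not excluded on density grounds alone, and its almost-complete adjacency pattern is harder to translate back into the forest than a sparse hole. To make progress I would first peel off manifestly perfection-preserving structure: the universal vertices of $\mathcal{G}_G$ form a clique $U$, and, being adjacent to everything, they make $\mathcal{G}_G$ the join $\mathcal{G}_G=K_{|U|}+\mathcal{G}_G[V\setminus U]$; as the join of perfect graphs is perfect (Lov\'{a}sz's replacement/substitution lemma), it suffices to prove that the non-universal core $\mathcal{G}_G[V\setminus U]$ is perfect. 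This mirrors the path computation, where the core collapses to an independent set. I would then attempt an induction on $\varepsilon(G)$ built on deleting a leaf edge $e$: this decomposes $V(\mathcal{G}_G)$ into a copy of $V(\mathcal{G}_{G-e})$ together with the subsets containing $e$ --- exactly the ``doubling plus bridges'' used for paths in Theorem~\ref{Thm-3.2} --- where the bridges are governed only by the edges incident with the single attachment vertex of $e$. The main obstacles are: first, showing that this recursive decomposition assembles through perfection-preserving operations (join, disjoint union, clique-cutset amalgamation) rather than creating an odd antihole across the bridges; and second, extending everything from trees to general forests, where a subset may spread its edges across several components and the ``closed trail'' argument for holes must be run componentwise. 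A successful resolution of the first obstacle would in fact supply the antihole exclusion for free, since it would exhibit $\mathcal{G}_G$ as built entirely from perfect pieces by perfection-preserving constructions.
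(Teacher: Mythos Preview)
The statement you are attempting to prove is labelled a \emph{Conjecture} in the paper; the authors do not supply a proof. Theorem~\ref{Thm-3.4} establishes perfectness only for the edge-set graphs of paths, and the conjecture is then posed for general acyclic graphs as an open problem. So there is no proof in the paper to compare your proposal against.

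As for the proposal itself, it is a strategy outline rather than a proof, and you say as much: the antihole case is left with two explicitly listed ``main obstacles'', and the hole case rests on the unproven assertion that the witness edges ``assemble into a closed trail in $G$''. That step is not obvious. The witnesses $a_i\in A_i$ and $b_i\in A_{i+1}$ live in different subsets, and there is no a~priori reason why $b_i$ and $a_{i+1}$ (both in $A_{i+1}$) should share a vertex, so the local witnesses need not chain up into a walk in $G$ at all, let alone a closed one. The non-adjacencies $A_i\not\sim A_{i+2}$ constrain how the supports $S(A_i)$ meet, but they do not by themselves force consecutive witness pairs to be incident. You would need a more careful choice of witnesses, or a different invariant, to extract a cycle in $G$ from an induced odd hole in $\mathcal{G}_G$. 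The reduction to a ``non-universal core'' via the join decomposition is sound and mirrors what happens for paths, but the inductive step on deleting a leaf edge is, as you acknowledge, the heart of the matter and is not carried out. In short: this is a reasonable plan of attack on an open conjecture, not a proof, and the paper offers nothing further to measure it against.
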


\section{Conclusion}

\textbf{Research problem:} The notion of a chromatic core subgraph of a graph $G$ was introduced in \cite{KSB}. We recall that, for a graph $G$ its \textit{structural size} is measured by its \textit{structor index} denoted and defined as, $si(G) = \nu(G) + \varepsilon(G)$. We say that the smaller of graphs $G$ and $H$ is the graph satisfying the condition, $min\{si(G), si(H)\}$. If $si(G) = si(H)$ the graphs are of equal structural size but not necessarily isomorphic. A straight forward example is the path, $P_4$ and the star graph, $S_3$.

\begin{definition}{\rm 
For a finite, undirected simple graph $G$ of order $\nu(G) = n \geq 1$ a chromatic core subgraph $H$ is a smallest induced subgraph $H$ $($smallest in respect of $si(H)$$)$ such that, $\chi(H) = \chi(G)$.
}\end{definition}

From the construction used in the proof of Theorem \ref{Thm-3.2} it follows that a finite number of distinct maximum cliques can be associated with a given edge-set graph $\mathcal{G}_{P_{n+1}}$. As an application, the largest number of vertices common to the maximum number of chromatic core subgraphs can be considered the most strategic vertices for protection from a disaster management and recovery plan in the event of graph destruction. The aforesaid observation motivates us to introduce a new graph parameter called the \textit{chromatic cluster number} of a graph $G$. It is denoted by $\complement(G)$. From Theorem \ref{Thm-3.2} it follows that $\complement (\mathcal{G}_{P_2}) = \complement (\mathcal{G}_{P_3}) =1$ and $\complement (\mathcal{G}_{P_4}) = 4$. Note that the vertices $v_{1,1} = \{e_1\}$, $v_{1,3} = \{e_3\}$, $v_{2,2} = \{e_1,e_3\}$ and $v_{1,3} = \{e_1,e_2,e_3\}$ corresponds to $\complement (\mathcal{G}_{P_4})$. 

\begin{problem}{\rm 
For the edge-set graph $\mathcal{G}_{P_{n+1}}$, $n \geq 4$, determine  $\complement(\mathcal{G}_{P_{n+1}})$.
}\end{problem}

The research on set-graphs [3] and edge-set graphs naturally leads to new concepts such as vertex degree sequence set-graphs and colour set-graphs and colour-string set-graphs. Preliminary definitions are provided below.

\subsection*{Terms of reference} 

\begin{enumerate}
\item If the degree sequence of a graph $G$ of order $n\geq 1$ is $(d_1\leq d_2\leq d_3\leq \cdots, d_n)$, then for a subsequence $(d_{t+1} = d_{t+2} = \cdots = d_{t+\ell} = m_i)$, $t\geq 0$, $1\leq \ell \leq n$, label the corresponding vertices to be $m_{i,1},m_{i,2},m_{i,3},\ldots,m_{i,\ell}$. Consider the set $\mathcal{V}(G) = \mathscr{P}(V) -\emptyset$ where, $\mathscr{P}(V)$ is the power set of $V(G)$.

\begin{definition}\label{Defn-4.2}
The degree sequence set-graph corresponding to $G$, denoted by $\mathcal{G}_{\mathcal{V}(G)}$, is the graph with the following properties.
\begin{enumerate}\itemsep0mm 
\item[(i)] $|\mathcal{G}_{\mathcal{V}(G)}|=2^{\nu}-1$ so that there exists a one to one correspondence between $V(\mathcal{G}_{\mathcal{V}(G)})$ and $\mathcal{V}(G)$.
\item[(ii)] Two vertices, say $v_{s,i}$ and $v_{t,j}$, in $\mathcal{G}_{\mathcal{V}(G)}$ are adjacent if some element(s) (specific vertex degree(s) of $G$) in $v_{s,i}$ is adjacent to some element(s) of $v_{t,j}$ in $G$.
\end{enumerate}
\end{definition}

It follows easily that for a complete graph $K_n$, $n\geq 1$ has its corresponding degree sequence set-graph, a complete graph.

\begin{problem}{\rm 
Discuss the properties of the degree sequence set-graph corresponding to graph $G$.
}\end{problem}

\item Let the minimum colour set $\mathcal{C} =\{c_1,c_2,c_3,\ldots, c_\chi\}$ permit a chromatic colouring of $G$ in accordance with the rainbow neighbourhood convention. Let $\mathcal{C}^{\{\}}(G) = \mathscr{P}(\mathcal{C}) - \emptyset$ where, $\mathscr{P}(\mathcal{C})$  is the power set of $\mathcal{C}$.

\begin{definition}\label{Defn-4.3}
The colour set-graph corresponding to $G$, denoted by $\mathcal{G}_{\mathcal{C}^{\{\}}(G)}$, is the graph with the following properties.
\begin{enumerate}\itemsep0mm
\item[(i)] $|\mathcal{G}_{\mathcal{C}^{\{\}}(G)}|=2^{\chi}-1$ so that there exists a one to one correspondence between $V(\mathcal{G}_{\mathcal{C}^{\{\}}(G)})$ and $\mathcal{C}^{\{\}}(G)$.
\item[(ii)] Two vertices, say $v_{s,i}$ and $v_{t,j}$, in $\mathcal{G}_{\mathcal{C}^{\{\}}(G)}$ are adjacent if some element(s) (specific vertex degree(s) of $G$) in $v_{s,i}$ is adjacent to some element(s) of $v_{t,j}$ in $G$.
\end{enumerate}
\end{definition}

Clearly, for all graphs $G$ with $\chi(G) =2$ the colour set-graph is $K_3$.

\begin{problem}
Discuss the properties of the colour set-graph corresponding to a chromatic colouring of a graph $G$.
\end{problem}

This problem is similar to (1). For a minimum colour set $\mathcal{C} =\{c_1,c_2,c_3,\ldots, c_\chi\}$ the corresponding colour weight sequence is $(\underbrace{c_1,c_1,c_1,\ldots,c_1}_{\theta(c_1)~entries},\cdots,\underbrace{c_\chi,c_\chi,c_\chi,\ldots,c_\chi}_{\theta(c_\chi~entries})$. 

Let $\mathcal{C}^\circ(G) = \{c_{1,1},c_{1,2},c_{1,3}\ldots,c_{1,\theta(c_1)},\cdots,c_{\chi,1},c_{\chi,2},c_{\chi,3},\ldots,c_{\chi,\theta(c_\chi)}\}$. We can define the colour-string set-graph, $\mathcal{G}_{\mathcal{C}^\circ(G)}$ similar to Definition \ref{Defn-4.2}.

\begin{problem}
Research the properties of the colour-string set-graph corresponding to a chromatic colouring of a graph $G$.
\end{problem}

\end{enumerate}

\end{document}